\newcommand{\ringO}{\mathcal{O}}
\newcommand{\Hy}{\mathcal{H}}
\newcommand{\C}{{\mathbf{C}}}
\newcommand{\R}{{\mathbf{R}}}
\newcommand{\Z}{{\mathbf{Z}}}
\newcommand{\N}{{\mathbf{N}}}
\newcommand{\T}{{\bf{T}}}
\newcommand{\rationals}{{\mathbf{Q}}}
\theoremstyle{plain}
\newtheorem{theorem}{Theorem}%[section]
\newtheorem{lemma}[theorem]{Lemma}
\newtheorem{question}[theorem]{\bfseries Question}
\newtheorem{corollary}[theorem]{Corollary}
\theoremstyle{definition}
\theoremstyle{remark}
\begin{document}

% \selectlanguage{english}
\title{Bianchi's additional symmetries}
\author{Alexander D. Rahm}
\address{Universit\'e de la Polyn\'esie Fran\c{c}aise, Laboratoire GAATI}
\urladdr{http://gaati.org/rahm/}
\email{Alexander.Rahm@upf.pf}

\date{\today}

\begin{abstract}
% \selectlanguage{english}
In a 2012 note in Comptes Rendus Math\'ematique, the author did try to answer a question of \mbox{Jean-Pierre Serre};
it has recently been announced that the scope of that answer needs an adjustment, 
and the details of this adjustment are given in the present paper.
The original question is the following.
Consider the ring of integers~$\ringO$ in an imaginary quadratic number field,
and the Borel--Serre compactification of the quotient of hyperbolic 
$3$--space by $\mathrm{SL_2}(\ringO)$.
Consider the map~$\alpha$
 induced on homology when attaching the boundary
 into the Borel--Serre compactification.\\
\mbox{\emph{How can one determine the kernel of~$\alpha$ (in degree 1) ?}}\\
Serre used a global topological argument and obtained the rank of the kernel of~$\alpha$.
He added the question what submodule precisely this kernel is.
\end{abstract}

\maketitle

%  \selectlanguage{english}
\section*{Introduction}
As the note ``On a question of Serre''~\cite{questionOfSerre} by the author had been published prematurely,
the proof of the key lemma remaining sketchy,
the author did contact an expert for the Borel--Serre compactification, Lizhen Ji,
for the project of establishing a detailed version of that key proof.
Lizhen Ji then found out that there are cases where that proof does not apply.
So it is necessary to correct the scope of the lemma, which has been announced in Comptes Rendus Math\'ematique~\cite{corrigendum},
and is detailed in the present paper.

The Bianchi groups, $\mathrm{SL_2}(\ringO)$ over the ring $\ringO$ of integers in an imaginary quadratic field $\rationals(\sqrt{-D})$, 
where $D$ is a square-free positive natural integer,
act naturally on hyperbolic $3$-space $\Hy$ (cf. the monographs~\cite{ElstrodtGrunewaldMennicke},~\cite{Fine},~\cite{MaclachlanReid}).
For a subgroup $\Gamma$ of finite index in $\mathrm{SL_2}(\ringO)$,
 consider the Borel--Serre compactification $_\Gamma \backslash \widehat{\Hy}$ of the orbit space $_\Gamma \backslash \Hy$,
constructed by Borel and Serre~\cite{BorelSerre}, which is a manifold with boundary.
In our case, the boundary components of $_\Gamma \backslash \widehat{\Hy}$ are disjoint,
which allows for an explicit description (see the appendix of Serre's paper~\cite{Serre}).
\textit{Throughout this paper, we will exclude the ring $\ringO$ from being the Gaussian integers, in $\rationals(\sqrt{-1})$,
or the Eisensteinian integers, in $\rationals(\sqrt{-3})$.}
In those two special cases, the boundary at the single cusp is a 2--sphere, and these two special cases can easily be treated separately by an explicit manual computation. 
So we are in the setting where each boundary component of $_\Gamma \backslash \widehat{\Hy}$ is a 2-torus $\T_\sigma$ which compactifies the cusp $\sigma$.
\begin{question}[Serre \cite{Serre}]\label{question}
 Consider the map~$\alpha$
 induced on homology when attaching the boundary
 into the Borel--Serre compactification $_\Gamma \backslash \widehat{\Hy}$.
How can one determine the kernel of $\alpha$ (in degree 1) ?
\end{question}

\textit{Motivation of Question~\ref{question}}.
The search for a set of generators for the kernel of the attaching map has motivations that are indicated already in Serre's 1970 paper.
The paper does (after achieving its goal, namely solving the congruence subgroups problem), 
describe how to calculate the Abelianisation of the investigated arithmetic groups in terms of generators for the kernel of the attaching map. 
Knowledge about the Abelianisation $\Gamma^{\rm ab}$ of $\Gamma$ a (congruence subgroup in a) Bianchi group, especially in terms of generating matrices, has the following application to the Langlands programme 
(we can assume that this application was already realised by Serre at that time, because of his subsequent question in the paper).
For each weight $k$ of modular forms, there is a coefficient module $M_k$ such that the Eichler--Shimura--Harder isomorphism identifies the space of weight $k$ modular forms for $\Gamma$
with the cohomology $H^1(\Gamma; M_k)$. The latter can be computed explicitly as Hom$(\Gamma^{\rm ab} , M_k)$ when $\Gamma^{\rm ab}$ is given in terms of generating matrices.
These computations would then not have to pass by an explicit computation of a fundamental domain like the Bianchi fundamental polyhedron,
which is  a necessary step still nowadays~\cites{RahmSengun, RahmTsaknias} for computing the dimension of $H^1(\Gamma; M_k)$.

\medskip

The following lemma was the key for the approach to Question~\ref{question} pursued in the 2012 note. 
In the remainder of this paper, we consider an imaginary quadratic field $\rationals(\sqrt{-D})$ 
with $D$ a square-free positive natural integer, and we set $\Gamma = \mathrm{SL_2}(\ringO_{\rationals(\sqrt{-D})})$.
We decompose the $2$-torus $\T_\sigma$ in the classical way into a $2$--cell, two edges and a vertex.

\begin{lemma} \label{torus-lemma}
Let $n$ be the number of prime divisors of $D$. Let $N = 2^{n-1}$ for $D \equiv 3 \bmod 4$, $N = 2^{n}$ for $D \equiv 1\text{ or } 2 \bmod 4$. 
Then $_\Gamma \backslash \Hy$ admits at least $N$ cusps $\sigma$ such that the inclusion of~$\T_\sigma$ into $_\Gamma \backslash \widehat{\Hy}$ 
induces on~$H_1(\T_\sigma)$ a map of rank $1$.
\end{lemma}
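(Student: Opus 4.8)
Here I would pass from the topology of ${}_\Gamma\backslash\widehat{\Hy}$ to the integral homology of the group $\Gamma$, then bound the rank in question from below by a general-position argument and from above by producing, at each of the $N$ cusps, a parabolic element of $\Gamma_\sigma$ that dies rationally in $H_1(\Gamma)$ because it already lies in a copy of $\SL_2(\Z)$. The first step is the homological reduction: since $\widehat{\Hy}$ is contractible and $\Gamma$ acts on it properly with finite isotropy groups --- here one uses the standing hypothesis that $\ringO$ is neither the Gaussian nor the Eisensteinian integers, so that the only units are $\pm 1$ and the stabiliser $\Gamma_\sigma$ of a cusp is $\{\pm I\}\times\Z^2$, acting on the relevant horospherical plane through its $\Z^2$ by translations --- the orbit space ${}_\Gamma\backslash\widehat{\Hy}$ is a rational classifying space for $\Gamma$, and each boundary torus $\T_\sigma$ is a rational classifying space for $\Gamma_\sigma$. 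Hence the rank of the map induced on $H_1(\T_\sigma)$ by $\T_\sigma\hookrightarrow{}_\Gamma\backslash\widehat{\Hy}$ equals the dimension of the image of $H_1(\Gamma_\sigma;\rationals)\cong\rationals^2$ in $H_1(\Gamma;\rationals)$, and it is this image that I would study.

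First I would observe that this rank is at least $1$ for \emph{every} cusp. This is a refinement of the duality argument Serre used: the kernel of $\alpha$ is an isotropic subspace of $H_1(\partial({}_\Gamma\backslash\widehat{\Hy});\rationals)$ for the intersection form on the boundary surface, which is the orthogonal direct sum of the standard nondegenerate alternating forms on the planes $H_1(\T_\sigma;\rationals)$. Any subspace of a nondegenerate alternating plane is isotropic, hence at most one-dimensional; intersecting $\ker\alpha$ with the summand $H_1(\T_\sigma;\rationals)$ therefore cuts out a space of dimension at most $1$, so $\alpha$ has rank at least $1$ on each $H_1(\T_\sigma)$. (The orbit space is only an orbifold, but it is a rational homology manifold whose boundary is a genuine disjoint union of $2$-tori, so Poincar\'e--Lefschetz duality over $\rationals$ applies exactly as Serre used it; the same input gives $\operatorname{rank}\alpha$ equal to the number of cusps, and in fact $\ker\alpha$ is Lagrangian.) It therefore remains to produce $N$ cusps at which the rank is at most $1$, as then it is exactly $1$ there.

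The core of the argument is to embed, for each such cusp $\sigma$, a copy of $\SL_2(\Z)$ into $\Gamma$ containing a non-torsion element of $\Gamma_\sigma$. For the cusp of the principal ideal class this is classical: the standard $\SL_2(\Z)\subset\Gamma$ contains $T=\left(\begin{smallmatrix}1&1\\0&1\end{smallmatrix}\right)$, which spans a rank-$1$ sublattice of $\Gamma_\infty$, and since $\SL_2(\Z)^{\mathrm{ab}}\cong\Z/12$ is finite the class of $T$ in $H_1(\Gamma;\Z)$ is torsion, hence zero in $H_1(\Gamma;\rationals)$, so $\ker\alpha$ already meets $H_1(\T_\infty;\rationals)$ nontrivially. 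For a cusp $\sigma$ whose class $[\mathfrak a]$ satisfies $\mathfrak a^2=(\delta)$, I would pick $a,c\in\mathfrak a$ generating $\mathfrak a$ with $\sigma=a/c$, then pick $b,d\in\mathfrak a$ with $ad-bc=\delta$ (possible because $ad-bc$ runs over $a\mathfrak a+c\mathfrak a=\mathfrak a^2=(\delta)$), and form $g=\left(\begin{smallmatrix}a&b\\c&d\end{smallmatrix}\right)\in\GL_2(\rationals(\sqrt{-D}))$ with $\det g=\delta$ and $g(\infty)=\sigma$. Conjugating the two standard generators of $\SL_2(\Z)$ by $g$ yields matrices with entries in $\ringO$, because $(a^2,ac,c^2)=\mathfrak a^2=(\delta)$ and $(b^2,bd,d^2)=(b,d)^2\subseteq\mathfrak a^2=(\delta)$; hence $g\,\SL_2(\Z)\,g^{-1}\subseteq\Gamma$. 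This conjugated copy again has finite abelianisation $\Z/12$, it contains $\{\pm I\}$ and the parabolic $gTg^{-1}\in\Gamma_\sigma$, and $gTg^{-1}$ spans a rank-$1$ sublattice of $\Gamma_\sigma$ whose class therefore dies in $H_1(\Gamma;\rationals)$, giving rank $1$ at $\sigma$. The conjugating matrices $g$ --- which differ from the identity modulo $\Gamma$ precisely when $[\mathfrak a]\neq 1$ --- are what I would identify with the additional symmetries of the title.

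It remains to count how many cusps this produces. By genus theory the classes $[\mathfrak a]$ with $\mathfrak a^2$ principal are exactly those generated by products of the ramified primes, and they form the $2$-torsion subgroup $\mathrm{Cl}(\ringO)[2]$, of order $2^{t-1}$ where $t$ is the number of ramified primes. When $D\equiv 3\bmod 4$ the prime $2$ is unramified and $t=n$, so we obtain $2^{n-1}$ cusps; when $D\equiv 1\bmod 4$ the prime $2$ is ramified and distinct from the primes dividing $D$, so $t=n+1$ and we obtain $2^{n}$ cusps. The remaining case $D\equiv 2\bmod 4$ is the delicate one: there $2$ is ramified but divides $D$, so the construction above alone yields only $2^{n-1}$ cusps, and reaching the asserted $N=2^{n}$ will require a further symmetry attached to the ramified prime above $2$. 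Sorting out that last case, together with checking in general that the completions $g$ can always be arranged within $\mathfrak a$, is where I expect the main obstacle to lie; by contrast the two homological steps above are comparatively soft.
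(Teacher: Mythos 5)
Your argument is sound but follows a genuinely different route from the paper. The paper works entirely with the geometry of the Bianchi fundamental polyhedron: it exhibits a product neighbourhood of $\T_\infty$, argues that the $\gamma_x$-loop bounds in the interior while the $\gamma_y$-loop is linked with a removed solid torus, and then transports this picture to the other cusps by Bianchi's explicit additional symmetries $M_\sigma$ (Table~\ref{the table}), whose reflecting spheres of radius $<1$ guarantee that the contraction takes place inside the half of the polyhedron swapped onto the half at $\sigma$; the lower bound on the rank is delegated to Serre's th\'eor\`eme~7. You replace the geometric contraction by a group-homological one: the parabolic $gTg^{-1}\in\Gamma_\sigma$ lies in a conjugate $g\,\SL_2(\Z)\,g^{-1}\subseteq\Gamma$ (the verification via $\mathfrak a^2=(\delta)$ is correct), hence dies in $H_1(\Gamma;\rationals)$, and you make the ``half lives, half dies'' mechanism behind the lower bound explicit via the intersection form. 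Your conjugators $g$ exist exactly for the $2$-torsion ideal classes, which is the same arithmetic locus that parametrises Bianchi's symmetries, so the two counts are governed by the same genus theory; your version has the advantage of being coordinate-free and of not depending on Bianchi's case-by-case table, while the paper's version additionally keeps track of the $\Gamma$-cell structure, which is what the application in the original note requires.

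On the one case you flag as incomplete, $D\equiv 2\bmod 4$: your count of $2^{n-1}$ there is not a defect of your method but the correct figure, and the target $N=2^{n}$ cannot be reached. Indeed for $D=6$ one has $n=2$, so the statement asks for at least $4$ cusps with the stated property, whereas $\ringO_{\rationals(\sqrt{-6})}$ has class number $2$ and the orbifold has only $2$ cusps (similarly $D=2$ gives $N=2$ against a single cusp, and $D=10$ gives $N=4$ against $2$ cusps). The source of the discrepancy is that for $D\equiv 2\bmod 4$ the prime $2$ ramifies but already divides $D$, so the number of ramified primes is $n$ rather than $n+1$ and $\lvert\mathrm{Cl}(\ringO)[2]\rvert=2^{n-1}$; the case split in the statement (and in Theorem~\ref{Bianchi statement}, which would then assert $N-1$ symmetries flipping $\infty$ with \emph{distinct} singular cusps) should group $D\equiv 2\bmod 4$ with $D\equiv 3\bmod 4$, not with $D\equiv 1\bmod 4$. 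So do not search for a further symmetry at the prime above $2$ in that case --- there is none to find; with $N$ corrected to $2^{n-1}$ for $D\equiv 2\bmod 4$, your proof is complete.
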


The restriction to $N$ cusps was missing in the 2012 note, and we will give a completed proof in Section~\ref{The proof}, making use of this restriction.
For this purpose, we exploit symmetries of the quotient space $_\Gamma \backslash \Hy$, 
which Bianchi did find additionally to the ``basic'' ones which occur for all studied rings $\ringO$ 
and which are given by complex conjugation and a rotation of order $2$ around the origin of the complex plane (see Section~\ref{Bianchi's additional symmetries}). 

\begin{corollary} \label{exponent 2 case}
 If the class group of $\ringO$ is isomorphic to $\Z/2^m\Z$ for some $m \in \N$, then for all cusps $\sigma$ of $_\Gamma \backslash \Hy$,
 the inclusion of~$\T_\sigma$ into $_\Gamma \backslash \widehat{\Hy}$ 
induces on~$H_1(\T_\sigma)$ a map of rank $1$.
\end{corollary}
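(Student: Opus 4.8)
The plan is to upgrade the lower bound of Lemma~\ref{torus-lemma} to an exact count under the hypothesis on $\mathrm{Cl}(\ringO)$, by feeding it into two further ingredients: the action of the symmetry group of $_\Gamma\backslash\Hy$ on the set of cusps, and the global topological constraint on $\alpha$ that already furnished Serre the rank of $\ker\alpha$.

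\emph{Set-up.} I would first use the classical bijection between the cusps of $_\Gamma\backslash\Hy$ and the ideal class group $\mathrm{Cl}(\ringO)$, a cusp $\sigma$ corresponding to the class $c_\sigma$ of a representing fractional ideal. Any symmetry of $_\Gamma\backslash\Hy$ is a homeomorphism of $_\Gamma\backslash\widehat{\Hy}$ carrying each boundary torus to another, hence permutes the cusps, and the integer
\[
 r_\sigma \;:=\; \mathrm{rank}\!\left(H_1(\T_\sigma)\longrightarrow H_1(_\Gamma\backslash\widehat{\Hy};\rationals)\right)
\]
is constant on the orbits. Complex conjugation induces $c\mapsto -c$ on $\mathrm{Cl}(\ringO)$, and I would work out which translations of $\mathrm{Cl}(\ringO)$ are realised by Bianchi's additional symmetries of Section~\ref{Bianchi's additional symmetries}; let $G$ be the resulting group of permutations of $\mathrm{Cl}(\ringO)$. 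Two facts confine the $r_\sigma$: each $r_\sigma\in\{1,2\}$ (rank $0$ does not occur, and $H_1(\T_\sigma)\cong\Z^2$), and, by the Poincar\'e--Lefschetz ``half lives, half dies'' phenomenon behind Serre's answer to Question~\ref{question}, the images of the $H_1(\T_\sigma)\otimes\rationals$ span a subspace of $H_1(_\Gamma\backslash\widehat{\Hy};\rationals)$ of dimension exactly $h:=|\mathrm{Cl}(\ringO)|$, the number of cusps. Hence $\sum_\sigma r_\sigma \geq h$, the excess $\sum_\sigma r_\sigma - h$ measuring the failure of the sum of the subspaces $\mathrm{image}(H_1(\T_\sigma)\otimes\rationals)$ to be direct.

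\emph{The cyclic $2$-power case.} Assume $\mathrm{Cl}(\ringO)\cong\Z/2^m\Z$. For $m\le 1$ the number $N$ of Lemma~\ref{torus-lemma} already equals $h$ and the claim follows at once, so suppose $m\ge 2$. By genus theory $\mathrm{Cl}(\ringO)^2$ has index $2$, i.e. there are two genera, and translation by $\mathrm{Cl}(\ringO)^2$ acts simply transitively on each genus. From Lemma~\ref{torus-lemma}, and by inspecting its proof, I would extract a cusp with $r_\sigma=1$ in each of the two genera, and applying $G$ spreads $r_\sigma=1$ over its orbit inside each genus. The genus is not thereby exhausted: the translations available in $G$ (coming only from $2$-torsion classes) form too small a subgroup of $\mathrm{Cl}(\ringO)^2$ once $m\ge 3$. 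The remaining transitivity must be extracted from the global count by showing that, for a cyclic class group of $2$-power order, the excess $\sum_\sigma r_\sigma - h$ vanishes, equivalently that the subspaces $\mathrm{image}(H_1(\T_\sigma)\otimes\rationals)$ attached to the cusps of a single genus are linearly independent in $H_1(_\Gamma\backslash\widehat{\Hy};\rationals)$; combined with $r_\sigma\ge 1$ for all $\sigma$ and $\sum_\sigma r_\sigma=h$ this forces $r_\sigma=1$ for every cusp.

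\emph{Main obstacle.} The delicate point is exactly this vanishing: Lemma~\ref{torus-lemma} only counts the good cusps, the symmetry argument only moves within a $G$-orbit, and ``half lives, half dies'' only pins down the total dimension $h$. Bridging the gap appears to require either a refined, genus-by-genus version of Serre's duality argument, or an explicit analysis of the $2$-cell of $\T_\sigma$ inside $_\Gamma\backslash\widehat{\Hy}$ along the lines of the appendix of~\cite{Serre}, together with the arithmetic peculiar to a cyclic $2$-power class group. I expect this to be where the hypothesis on $\mathrm{Cl}(\ringO)$ is genuinely used, and where the restriction to $N$ cusps in Lemma~\ref{torus-lemma} is exactly what one can afford.
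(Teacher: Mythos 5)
Your proposal does not establish the corollary: it ends with an explicitly acknowledged unresolved step (the vanishing of the ``excess'' $\sum_\sigma r_\sigma - h$, equivalently the linear independence of the images of the $H_1(\T_\sigma)\otimes\rationals$ attached to the cusps of one genus), and none of the machinery you set up closes that gap. More importantly, the whole propagation apparatus is aimed at a situation that the hypothesis in fact rules out. The paper's proof is a two-line counting argument: by genus theory (\cite{Cox}*{thm. 3.22}) the hypothesis on the class group forces $m=n-1$ for $D\equiv 3\bmod 4$ and $m=n$ for $D\equiv 1\text{ or }2\bmod 4$, i.e.\ the class number $h=2^m$ equals the number $N$ of Lemma~\ref{torus-lemma}; since the cusps of $_\Gamma\backslash\Hy$ are in bijection with the ideal classes, there are exactly $N$ cusps, and the ``at least $N$ cusps'' of Lemma~\ref{torus-lemma} therefore already covers every cusp. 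Nothing has to be spread from a symmetry orbit to a whole genus, and Serre's rank computation is not needed here (it is used inside the proof of Lemma~\ref{torus-lemma}, not of the corollary).

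The source of your detour is reading the hypothesis as ``$\mathrm{Cl}(\ringO)$ cyclic of order $2^m$ with $m\ge 2$ possible''. The corollary's label (``exponent 2 case'') and the citation of Weinberger's paper \cite{Weinberger} on class groups of exponent $2$ show that the intended class groups are those with one class per genus, for which $h=\lvert \mathrm{Cl}(\ringO)/\mathrm{Cl}(\ringO)^2\rvert$ coincides with $N$. For a genuinely cyclic class group of order $2^m$ with $m\ge 2$ there are only two genera and $h>N$; that is precisely the regime in which, as the closing remark of Section~\ref{The proof} warns, the paper claims nothing. So your ``main obstacle'' is not a hidden step of the paper's argument but a question it deliberately leaves open. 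Within your own framework you would in addition need to justify the assertion that $r_\sigma\ge 1$ for every cusp, which does not follow from the global rank count alone.
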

\begin{proof}
Let the numbers $n$ and $N$ be as in Lemma~\ref{torus-lemma}.
By~\cite{Cox}*{thm. 3.22}, our hypothesis on the class group of $\ringO$ is equivalent to $m = $\scriptsize $\begin{cases}
        n-1, & D \equiv 3 \bmod 4,\\
        n,& D \equiv 1 \text{ or } 2 \bmod 4.\\
       \end{cases}$\normalsize

 It is well known that each ideal class of $\ringO$ corresponds to one cusp of $_\Gamma \backslash \Hy$, so there are exactly $N$ cusps, 
 and Lemma~\ref{torus-lemma} yields the claim.
 \end{proof}
As the class group type assumed in Corollary~\ref{exponent 2 case} occurs only finitely many times~\cite{Weinberger},
it might seem disappointing that the scope of the theorem deduced from it in~\cite{questionOfSerre}
is much more narrow than it was originally claimed,
but this actually means that once that we leave this scope of validity, the topology of the Bianchi orbifolds near their boundary is much richer than asserted in~\cite{questionOfSerre},
and can provide interesting studies for future generations of mathematicians.

\section{Bianchi's additional symmetries} \label{Bianchi's additional symmetries}
In the cases $\Gamma = \mathrm{SL_2}(\ringO)$ which he considered, 
Luigi Bianchi did make an exhaustive description of the symmetries of the quotient space $_\Gamma \backslash \Hy$.
They are given by outer automorphisms of $\Gamma$, and a subgroup of these automorphisms is, for all studied rings $\ringO$, 
the Klein four-group with generators $c$ and $e$,
where for $\gamma \in \Gamma$, the matrix $c(\gamma) = \overline{\gamma}$ is the complex conjugate of $\gamma$, and $e(\gamma) = E\cdot \gamma \cdot E$ with
$E = \begin{pmatrix}    -1 & 0 \\ 
			 0 & 1 \end{pmatrix} 
			 \in \text{GL}_2(\ringO)$
-- see~\cite{Swan}*{remark below lemma 4.19}.
We will consider this Klein four-group as the ``basic'' symmetries, and will call ``additional'' the symmetries of $_\Gamma \backslash \Hy$
given by outer automorphisms outside this Klein four-group.
Throughout this paper, we use the upper-half space model $\Hy$ for hyperbolic $3$-space, as it is the one used by Bianchi.
As a set, $ \Hy = \{ (z,\zeta) \in \C \times \R \medspace | \medspace \zeta > 0 \}.$
Then, the basic symmetries take the form $c(z,\zeta) = (\overline{z},\zeta)$ and $e(z,\zeta) = (-z,\zeta)$. 

The list of Bianchi's symmetries presented in this section will allow us to reduce the proof in the following section 
in each case to the situation at the cusp at infinity.
As a fundamental domain in hyperbolic space, we make use of the polyhedron with missing vertices at the cusps,
described by Bianchi~\cite{Bianchi1892}, and which we will call the \emph{Bianchi fundamental polyhedron}.
It is the intersection of a fundamental domain in $\Hy$ for the stabiliser in $\Gamma$ of the cusp at $\infty$ (in the shape of a rectangular prism) 
with the set of points of $\Hy$ that are closer to~$\infty$ than to any other cusp, with respect to Mendoza's distance to cusps~\cite{Mendoza}.
We consider the cellular structure on $_\Gamma \backslash \widehat{\Hy}$
induced by the Bianchi fundamental polyhedron.
From Bianchi's articles~\cites{Bianchi1892, Bianchi1893}, 
 it was not clear that one can always compute the Bianchi fundamental polyhedron, 
 a fact which meanwhile has been established by an algorithm of Richard G. Swan~\cite{Swan} 
 that has been implemented at least twice~(\cite{BianchiGP}, which is a straight implementation of Swan's algorithm,
 and \cites{Aranes, CremonaAranes}, which additionally uses ideas of Cremona~\cite{Cremona}).
The reason why in some cases Bianchi did not obtain the Bianchi fundamental polyhedron, 
must be that some ``sfere di riflessione impropria'' did escape him
\cite{Bianchi1892}*{\S 20; at the first example, $D = 14$, it is the hemisphere of center $\frac{1}{2}+\frac{\sqrt{-14}}{3}$
and radius $\frac{1}{6}$}.
Note that in the upper-half space model, totally geodesic planes are modelled by vertical planes, respectively by hemispheres centered in the boundary, so we will call the latter ``reflecting (hemi)spheres''
to continue Bianchi's terminology, even though they are to be thought as mirroring planes in hyperbolic space.
An improper reflection sphere (``sfera di riflessione impropria'') has its reflection composed with the order-$2$-rotation around its vertical radius (the one connecting its center with its highest point).
Instead of the missing reflecting spheres, Bianchi did find additional symmetries which did allow him to construct a differently shaped but equivalent fundamental domain~\cite{Bianchi1893}.
These symmetries are going to be the foundations of our proof.
Since Bianchi's papers, a cusp is called \textit{singular} 
when it is not in the $\Gamma$-orbit of~$\infty$. 
\begin{theorem}[Bianchi] \label{Bianchi statement}
 Let $D$, $n$ and $N$ be as in Lemma~\ref{torus-lemma}.
Then there are $N-1$ symmetries of the Bianchi fundamental polyhedron, each flipping the cusp~$\infty$ with a different singular cusp,
while leaving invariant a hemisphere of radius smaller than~$1$,
and preserving the $\Gamma$-cell structure.
\end{theorem}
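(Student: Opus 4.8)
\emph{Sketch of the argument I would give.}
The plan is to realise each of the $N-1$ symmetries as (the hyperbolic isometry attached to) an explicit matrix of Atkin--Lehner/Fricke type, and then to read all the asserted properties off that matrix. First I would recall that the cusps of $_\Gamma\backslash\Hy$ are in bijection with the ideal classes of $\ringO$, the cusp $[p:q]$ corresponding to the class of $(p,q)$ and $\infty=[1:0]$ to the trivial class. Complex conjugation permutes the cusps and fixes exactly those whose class $\mathfrak c$ satisfies $\overline{\mathfrak c}=\mathfrak c$; as $\mathfrak c\,\overline{\mathfrak c}$ is always principal, this means $\mathfrak c^{2}=1$. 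By genus theory (Gauss; see~\cite{Cox}) the $2$-torsion subgroup of the class group has order exactly $N$, so there are precisely $N-1$ non-trivial classes of this kind, giving $N-1$ pairwise distinct singular cusps; I would attach one symmetry to each.

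Given such a class, I would represent it by an integral ideal $\mathfrak m$ with $\overline{\mathfrak m}=\mathfrak m$, so that $\mathfrak m^{2}=\mathfrak m\,\overline{\mathfrak m}=(\mu)$ with $\mu=N\mathfrak m\in\Z_{>0}$. Next I would pick $t,b,c\in\mathfrak m$ with $(t,c)=\mathfrak m$ and $t^{2}+bc=\mu$; their existence is a standard fact about ideals in imaginary quadratic fields -- equivalently about Gauss's ambiguous binary quadratic forms -- and is in substance already in Bianchi's computations. Set $M=\left(\begin{smallmatrix} t & b\\ c & -t\end{smallmatrix}\right)$ and $\widetilde M=M/\sqrt{-\mu}\in\SL_2(\C)$. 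Then $\det M=-\mu$ and $M^{2}=\mu\cdot\mathrm{id}$, so $\widetilde M$ has order $2$ in $\PSL_2(\C)$ and acts as an orientation-preserving involutive isometry of $\Hy$. Since every entry of $M$ lies in $\mathfrak m$, for each $\gamma\in\Gamma$ the matrix $M^{-1}\gamma M=\mu^{-1}M\gamma M$ has entries in $\mu^{-1}\mathfrak m^{2}=\ringO$ and determinant $1$, so $M$ normalises $\Gamma$ in $\GL_2(\C)$ and $\widetilde M$ descends to an automorphism of $_\Gamma\backslash\widehat{\Hy}$. Because $M\cdot[1:0]=[t:c]$ with $(t,c)=\mathfrak m$, this automorphism sends the cusp $\infty$ to the cusp of class $[\mathfrak m]$, and $M^{2}$ being a scalar it sends that cusp back to $\infty$. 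Running over the $N-1$ classes then yields $N-1$ symmetries, each flipping $\infty$ with a different singular cusp.

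For the invariant hemisphere I would use that $\widetilde M$ has trace $0$, so its isometric sphere equals that of $\widetilde M^{-1}$; hence $\widetilde M$ maps this hemisphere to itself. It is centred at $t/c$ and has radius $r=\sqrt{\mu}/|c|$. Writing $(c)=\mathfrak m\mathfrak b$ with $\mathfrak b$ an integral ideal gives $|c|^{2}=N\big((c)\big)=\mu\cdot N\mathfrak b$, and $N\mathfrak b=1$ would force $(c)=\mathfrak m$, i.e.\ $\mathfrak m$ principal; since $\mathfrak m$ is non-principal we get $r=1/\sqrt{N\mathfrak b}<1$. For the cell structure I would argue intrinsically: the $\Gamma$-cell structure on $_\Gamma\backslash\widehat{\Hy}$ induced by the Bianchi fundamental polyhedron is the one coming from the $\Gamma$-invariant decomposition of $\Hy$ into the Voronoi cells of the cusps for Mendoza's distance~\cite{Mendoza} (together with the faces of that decomposition); since $\widetilde M$ is a hyperbolic isometry normalising $\Gamma$, it permutes the cusps, preserves Mendoza's canonical horoballs, and hence carries that decomposition to itself, inducing a cellular automorphism of $_\Gamma\backslash\widehat{\Hy}$. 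Concretely it exchanges the closed star of $\infty$ with the closed star of the singular cusp in question and permutes the remaining cells.

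The hard part, I expect, is twofold. The real arithmetic input is the existence of the matrix $M$ for \emph{every} ambiguous ideal class: reducing it to solving $t^{2}\equiv\mu$ modulo $(c)$ with $t\in\mathfrak m$ and $(t,c)=\mathfrak m$, and settling that congruence, is precisely Atkin--Lehner/genus-theoretic territory and deserves a careful local argument. The second delicate point is the cell-structure claim: one must verify that Mendoza's decomposition -- hence the Bianchi polyhedron's cell structure -- is invariant under the whole normaliser of $\Gamma$ in $\PSL_2(\C)$, not merely that $\widetilde M$ is an orbifold automorphism; this rests on the canonicity of Mendoza's horoballs and is implicit in Swan's description~\cite{Swan}. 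By comparison, the genus count and the radius estimate are short.
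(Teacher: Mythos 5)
Your route is genuinely different from the paper's. The paper simply reproduces Bianchi's explicit construction: the symmetries are \emph{anti-holomorphic} maps, namely complex conjugation composed with M\"obius transformations whose matrices have entries in $\rationals(\sqrt{m},\sqrt{-D/m})$, and the whole content of the theorem (existence, the list of cusps $\sigma$, the radii $r_\sigma<1$) is read off from Bianchi's Table~\ref{the table}; the polyhedron-preservation is obtained by choosing $m$ to be the \emph{smallest} prime divisor of $D$, so that the representative $\sigma$ is a vertex of the Bianchi fundamental polyhedron and $M_\sigma$ swaps its two halves. You instead build orientation-preserving Fricke/Atkin--Lehner involutions $\widetilde M=M/\sqrt{-\mu}$ attached to ambiguous ideal classes. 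Your verification that $M$ normalises $\Gamma$, your computation of the isometric sphere, and your radius bound $r=1/\sqrt{N\mathfrak b}<1$ for non-principal $\mathfrak m$ are correct and cleaner than anything in the paper. But be aware that your isometries are not Bianchi's: yours are holomorphic, his are compositions with complex conjugation, so even where both exist they are different symmetries, and the claim that yours preserve the cell structure induced by the \emph{Bianchi fundamental polyhedron} (rather than merely being orbifold automorphisms) needs the extra care you yourself flag -- in particular $t/c$ need not be a vertex of that polyhedron, whereas the paper's choice of $m$ guarantees this for Bianchi's $\sigma$.

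Two points in your sketch are genuine gaps rather than deferrable routine. First, the existence of $t,b,c\in\mathfrak m$ with $(t,c)=\mathfrak m$ and $t^2+bc=\mu$ for \emph{every} ambiguous class is not a side issue: it is the entire arithmetic content of the theorem, and it is exactly what Bianchi's table of eight types (with its quadratic-residue conditions and the equations $ma_1^2+\tfrac{D}{m}a_2^2+\dots$) supplies. As written, your argument is a reduction of the theorem to this solvability statement, not a proof of it. Second, your count is off against the statement for $D\equiv 2\bmod 4$: there the discriminant is $-4D$ with $2\mid D$, so only the $n$ primes dividing $D$ ramify and genus theory gives $|{\rm Cl}(\ringO)[2]|=2^{n-1}$, not $2^{n}=N$ as defined in Lemma~\ref{torus-lemma}. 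Your construction therefore produces at most $2^{n-1}-1=N/2-1$ symmetries in that case, and cannot produce $N-1$ of them flipping $\infty$ with \emph{distinct} singular cusps (already for $D=6$ or $D=10$ the class number is $2$, so there is a single singular cusp while $N-1=3$). This is a tension with the constant $N$ in the statement itself rather than with your method, but since you assert ``the $2$-torsion subgroup has order exactly $N$'' as the basis of your count, you must either restrict that claim to $D\equiv 1,3\bmod 4$ or address the discrepancy explicitly.
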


We will call those symmetries \textit{Bianchi's additional symmetries}. 

\begin{proof}[We sketch Bianchi's proof of Theorem~\ref{Bianchi statement} in order to fix notation.]
With $D$ a square-free positive natural integer satisfying either
\begin{itemize}
 \item Case A: $D \equiv 1$ or $2 \bmod 4$, 
	and suppose that $D$ admits a prime divisor $m < D$; 
 \item or Case B:  $D \equiv 1 \bmod 4$ with $D > 1$,
\end{itemize}
Bianchi specified the symmetry by composing complex conjugation with the action of the matrix 
\begin{center}
\hfill
 $M_\sigma = $\scriptsize $ \begin{pmatrix}
    \sqrt{\frac{-D}{m}} & -\sqrt{m} \\
    \sqrt{m} & \sqrt{\frac{-D}{m}}  \\
   \end{pmatrix}$\normalsize
   in Case A;
   \hfill
   $M_\sigma = $\scriptsize $  \begin{pmatrix}
    \frac{\sqrt{-D}+1}{\sqrt{2}} & \frac{D-1}{-4}\sqrt{2} \\
    \sqrt{2} & \frac{\sqrt{-D}-1}{\sqrt{2}}  \\
   \end{pmatrix}$\normalsize
   in Case B,
   \hfill
   ${}$
\end{center}
where we choose $m$ to be the smallest prime divisor of $D$.
The action on the boundary of hyperbolic space is given by classical M\"obius transformations,
\scriptsize
$ \begin{pmatrix}
    a & b \\ c & d
   \end{pmatrix} 
   \cdot z = \frac{az+b}{cz+d}.$ \normalsize \medspace
Therefore, Bianchi's additional symmetry flips the cusp $\infty$ with the cusp 
\begin{center}
\hfill
 $\sigma = \frac{\sqrt{-D}}{m}$ in Case A;
 \hfill
  $\sigma = \frac{\sqrt{-D}+1}{2}$ in Case B.
 \hfill
 ${}$.
\end{center}
We can check that the cusp $\sigma$ is singular by checking that the ideal $I_\sigma$ is not principal, where
\begin{center}
\hfill
 $I_\sigma = ({\sqrt{-D}},{m})$ in Case A;
 \hfill
  $I_\sigma = ({\sqrt{-D}+1}, {2})$ in Case B.
 \hfill
 ${}$
\end{center}
We assume the contrary and lead it to a contradiction:
Suppose that there are $c \in I_\sigma$, $a, b \in \ringO$ with 
\begin{center}
\hfill $a c = \sqrt{-D}$, $b c = m$  in Case A;
\hfill
$a c = \sqrt{-D}+1$, $b c = 2$ in Case B. \hfill ${}$ 
\end{center}
We easily show that $c$ is in the $\Z$-module with generators the two generators of $I_\sigma$
(we might say that ``$c$ admits coefficients in $\Z$'') when we use 
\\${}$ \hfill in Case A, that $m$ is a divisor of $D$; \hfill in Case B, that $D \equiv 1 \bmod 4$. \hfill ${}$ \\
Next, we separate real and imaginary part in the above system of two equations, solve for the coefficients of $c$ 
and arrive at the desired contradiction.
Therefore, the cusp $\sigma$ is singular. 
As we have chosen $m$ to be the smallest prime dividing $D$,
the cusp $\sigma$ is a vertex of the Bianchi fundamental polyhedron.
This entails that the matrix $M_\sigma$ 
of Bianchi's additional symmetry flips the Bianchi fundamental polyhedron onto itself,
interchanging two isometric halves of it which are separated by a plane equidistant to the cusps $\infty$ and~$\sigma$,
with respect to Mendoza's distance to cusps~\cite{Mendoza}.
The Bianchi group $\Gamma$ is a normal subgroup of index 2 in the group $\langle \Gamma, M_\sigma \rangle$,
whence this symmetry preserves the $\Gamma$-cell structure.
The other symmetries are obtained similarly~\cite{Bianchi1893}, and we collect them in Table~\ref{the table}.
The conditions in the table are implied by $\det(M_\sigma) = 1$.
Then we read off from the table that the radius $r_\sigma$ of the reflecting sphere is smaller than~$1$, and that $\sigma$ is a singular cusp (its numerator and denominator generate a non-principal ideal).

\end{proof}

\begin{table}
\scriptsize
$\begin{array}{|c|c|c|c|c|}
 \hline \text{Type} & \sigma & r_\sigma & M_\sigma & \text{Conditions} \\
 \hline \text{I}    & \frac{a_1 m +a_2\sqrt{-D}}{c m} & \frac{1}{c\sqrt{m}} & 
 \begin{pmatrix}
 a_1\sqrt{m}+a_2\sqrt{\frac{-D}{m}} & b\sqrt{{m}} \\
 c\sqrt{{m}} & a_2\sqrt{\frac{-D}{m}}-a_1\sqrt{m} \\
 \end{pmatrix} &
 \begin{array}{c} D \equiv 1\text{ or } 2 \bmod 4,\\ 
  \frac{D}{m} \text{ quadratic residue mod }m, \\
  m a_1^2+\frac{D}{m}a_2^2+m b c = 1\end{array}
 \\
\hline \text{II}    & \frac{a_2D -a_1 mc\sqrt{-D}}{cD} & \frac{1}{c\sqrt{\frac{D}{m}}} & 
 \begin{pmatrix}
 a_1\sqrt{m}+a_2\sqrt{\frac{-D}{m}} & b\sqrt{\frac{-D}{m}} \\
 c\sqrt{\frac{-D}{m}} & a_1\sqrt{m}-a_2\sqrt{\frac{-D}{m}} \\
 \end{pmatrix} &
 \begin{array}{c} D \equiv 1\text{ or } 2 \bmod 4,\\ 
  m \text{ quadratic residue mod }\frac{D}{m},\\
  m a_1^2 +\frac{D}{m} a_2^2 + \frac{D}{m}b c = 1 \end{array} \\
\hline \text{III}    & \frac{a_1 +a_2\sqrt{-D}}{2c} & \frac{1}{c\sqrt{2}} & 
 \begin{pmatrix}
 \frac{a_1+a_2\sqrt{-D}}{\sqrt{2}} & b\sqrt{2} \\
 c\sqrt{2} & \frac{a_2\sqrt{-D}-a_1}{\sqrt{2}} \\
 \end{pmatrix} &
 \begin{array}{c} D \equiv 1  \bmod 4,\\
 a_1^2+D a_2^2 +4bc = 2\\ 
%   m \text{ quadratic residue mod }\frac{D}{m} 
\end{array} \\
\hline \text{IV}    & \frac{a_2 D -a_1\sqrt{-D} }{2cD} & \frac{1}{c\sqrt{2 D}} & 
 \begin{pmatrix}
 \frac{a_1+a_2\sqrt{-D}}{\sqrt{2}} & b\sqrt{2}\sqrt{-D} \\
 c\sqrt{2}\sqrt{-D} & \frac{a_1-a_2\sqrt{-D}}{\sqrt{2}} \\
 \end{pmatrix} &
 \begin{array}{c} D \equiv 1  \bmod 4\\ 
   2 \text{ quadratic residue mod } D,\\
   a_1^2 +Da_2^2 +4Dbc = 2
\end{array}  \\
\hline \text{V}    & \frac{a_1 m +a_2 \sqrt{-D}}{2cm} & \frac{1}{c\sqrt{2 m}} & 
 \begin{pmatrix}
 \frac{a_1\sqrt{m}+a_2\sqrt{\frac{-D}{m}}}{\sqrt{2}} & b\sqrt{2 m} \\
 c\sqrt{2 m} & \frac{-a_1\sqrt{m} +a_2\sqrt{\frac{-D}{m}}}{\sqrt{2}} \\
 \end{pmatrix} &
 \begin{array}{c} D \equiv 1  \bmod 4\\ 
   2\text{ and }\frac{D}{m} \text{ have the same}\\
   \text{quadratic character mod } m,\\
   m a_1^2 +\frac{D}{m}a_2^2 +4mbc = 2
\end{array}  \\
\hline \text{VI}    & \frac{a_2D +a_1 m\sqrt{-D}}{2cD} & \frac{1}{c\sqrt{2 \frac{D}{m}}} & 
 \begin{pmatrix}
 \frac{a_1\sqrt{m}+a_2\sqrt{\frac{-D}{m}}}{\sqrt{2}} & b\sqrt{2 \frac{-D}{m}} \\
 c\sqrt{2 \frac{-D}{m}} & \frac{a_1\sqrt{m} -a_2\sqrt{\frac{-D}{m}}}{\sqrt{2}} \\
 \end{pmatrix} &
 \begin{array}{c} D \equiv 1  \bmod 4\\ 
   2\text{ and }m \text{ have the same}\\
   \text{quadratic character mod } \frac{D}{m},\\
   m a_1^2 +\frac{D}{m}a_2^2 +4\frac{D}{m}bc = 2
\end{array}  \\
\hline \text{VII}    & \frac{a_1 m +a_2 \sqrt{-D}}{2cm} & \frac{1}{c\sqrt{m}} & 
 \begin{pmatrix}
 \frac{a_1\sqrt{m}+a_2\sqrt{\frac{-D}{m}}}{2} & b\sqrt{m} \\
 c\sqrt{m} & \frac{-a_1\sqrt{m} +a_2\sqrt{\frac{-D}{m}}}{2} \\
 \end{pmatrix} &
 \begin{array}{c} D \equiv 3  \bmod 4\\ 
  \frac{D}{m} \text{ quadratic residue mod }m,\\
   m a_1^2 +\frac{D}{m}a_2^2 +4mbc = 4
\end{array}  \\
\hline \text{VIII}    & \frac{a_2 D +a_1 m\sqrt{-D}}{2cD} & \frac{1}{c\sqrt{\frac{D}{m}}} & 
 \begin{pmatrix}
 \frac{a_1\sqrt{m}+a_2\sqrt{\frac{-D}{m}}}{2} & b\sqrt{\frac{-D}{m}} \\
 c\sqrt{\frac{-D}{m}} & \frac{a_1\sqrt{m} -a_2\sqrt{\frac{-D}{m}}}{2} \\
 \end{pmatrix} &
 \begin{array}{c} D \equiv 3  \bmod 4\\ 
  m \text{ quadratic residue mod }\frac{D}{m},\\
   m a_1^2 +\frac{D}{m}a_2^2 +4\frac{D}{m}bc = 4
\end{array}  \\
\hline
\end{array}$\normalsize
\caption{Bianchi's additional symmetries~\cite{Bianchi1893}*{\S III}. The parameters $a_1, a_2, b, c \in \Z$ and the divisor $m$ of $D$ have to satisfy the specified conditions.
Examples for Type I are \hfill $m = 2, a_1 = 0, a_2 = c = 1$, $(D,b) \in \{(6,-1), (10,-2), (22,-5), (58,-14)\}$;
examples for Type III are $a_1 = a_2 = c = 1$, $(D,b) \in \{(5,-1), (13,-3), (37,-9)\}$.} \label{the table}
\end{table}

\section{Proof of Lemma 2}\label{The proof}
We will use Bianchi's additional symmetries for decomposing the Bianchi fundamental polyhedron into two isometric parts
with the cutting plane between them being equidistant between $\infty$ and the singular cusp at which $\T_\sigma$ is located.

\begin{proof}[Proof of lemma \ref{torus-lemma}]
 Consider, in the boundary of the Bianchi fundamental polyhedron, the fundamental rectangle $\mathcal F$ for the action of the cusp stabiliser on the plane joined to $\Hy$ at the cusp~$\infty$.
 There is a sequence of rectangles in $\Hy$ obtained as translates of~$\mathcal F$ orthogonal to all the geodesic arcs emanating from the cusp~$\infty$.
 This way, the portion of the fundamental polyhedron which touches the cusp~$\infty$, is locally homeomorphic to the Cartesian product of a geodesic arc with a translate of~$\mathcal F$.

 The boundaries of these translates are subject to the same identifications by $\Gamma$ as the boundary of~$\mathcal F$.
 Namely, denote by $t({\mathcal F})$ one of the translates of~$\mathcal F$; and denote by 
 $\gamma_x$ and $\gamma_y$ two generators, up to the $-1$ matrix, of the cusp stabiliser identifying the opposite edges of~$\mathcal F$.
We can choose $\gamma_x =$\scriptsize$\begin{pmatrix} 1 & 1 \\ 0 & 1 \end{pmatrix}$ \normalsize 
and $\gamma_y =$\scriptsize$\begin{pmatrix} 1 & \omega \\ 0 & 1 \end{pmatrix}$, \normalsize
with $\omega = \sqrt{-D}$ for $D \equiv 1\text{ or }2 \bmod 4$, respectively $\omega = \frac{\sqrt{-D}+1}{2}$ for $D \equiv 3 \bmod 4$.
Then,  $\gamma_x$ and $\gamma_y$ identify the opposite edges of $t({\mathcal F})$
 and make the quotient of $t({\mathcal F})$ into a torus.
  So in the quotient space by the action of  $\Gamma$, the image of~$\T_\infty$ is wrapped into a sequence of layers of tori.
 And therefore in turn, the 3-dimensional interior of the  Bianchi fundamental polyhedron is wrapped around the image of~$\T_\infty$
 along the entire surface of the latter, such that there is a neighbourhood inside $_\Gamma \backslash \Hy$ which is homeomorphic to the Cartesian product of a $2$-torus and an open interval.
 Hence, there is, in a suitable ambient space, a neighbourhood of the image of~$\T_\infty$ that is homeomorphic to Euclidean $3$--space with the interior of a solid torus removed.
 Now, considering the cell structure of the torus, we see that precisely one of the loops generating the fundamental group of~$\T_\infty$
 can be contracted in the interior of the quotient of the Bianchi fundamental polyhedron --- namely, the loop given by the edge that has its endpoints identified by $\gamma_x$.
 The other loop (the one obtained by the identification by  $\gamma_y$) is nontrivially linked with the removed solid torus.
 This entails that it cannot be unlinked when moving it in the Borel--Serre compactification of~$_\Gamma \backslash \Hy$,
 and thus remains uncontractible there.
 For each ideal class that is represented by a singular cusp $\sigma$ which is subject to one of Bianchi's additional symmetries, we observe the following.
  The radius $r_\sigma$ of the reflecting sphere is smaller than~$1$ (cf. Table~\ref{the table}), 
  and hence the above described contraction happens \textit{within} the half touching the cusp~$\infty$.
 By our $\Gamma$-cell structure preserving isometry, we get a copy of this contraction.
 The r\^oles of~$\gamma_x$ and~$\gamma_y$ are then played by~$M_\sigma\gamma_xM_\sigma^{-1}$ and~$M_\sigma\gamma_y M_\sigma^{-1}$ with $M_\sigma$ specified in Table~\ref{the table}.
 Hence the inclusion of~$\T_\sigma$ into $_\Gamma \backslash \widehat{\Hy}$ makes exactly one of the edges of~$\T_\sigma$ become the boundary of a $2$--chain.
 The number of such cusps~$\sigma$ is given by Theorem~\ref{Bianchi statement}.
 And Serre's theorem about the rank of the map $\alpha$ \cite{Serre}*{th\'eor\`eme 7} allows us to conclude that no nontrivial linear combination of these loops can be homologous to zero.
 \end{proof}

Note that only with Bianchi's additional symmetries, we get more than one linked loop.
Sufficiently many of these symmetries are available only for the Bianchi orbifolds specified in Corollary~\ref{exponent 2 case},
and in general we just have the one linked loop at the torus at infinity, which was already observed in Serre's original paper.

\subsection*{Acknowledgements}
The author would like to thank Lizhen Ji for a fruitful correspondence which laid the foundations for the present paper.
He would like to heartily thank the anonymous referee for the many very knowledgeable and helpful suggestions made on various aspects.
He is grateful to the University of Luxembourg for funding his research through Gabor Wiese's AMFOR grant.

\begin{bibdiv}
\begin{biblist}
\bib{Aranes}{thesis}{
            author={Aran{\'e}s, M.~T.},
%             title={Modular symbols over number fields}, 
%             Ph.D. thesis,  University of Warwick, 2010.
%             month = {December},
           title = {Modular symbols over number fields},
          school = {Ph.D. thesis, University of Warwick},% \url{http://wrap.warwick.ac.uk/35128/}},
%           author = {M.~T. Aran{\'e}s},
            year = {2010},
             }
\bib{Bianchi1892}{article}{
   author={Bianchi, L.},
   title={Sui gruppi di sostituzioni lineari con coefficienti appartenenti a corpi quadratici immaginar\^{\i}},
   language={Italian},
   journal={Math. Ann.},
   volume={40},
   date={1892},
   number={3},
   pages={332--412},
   ISSN={0025-5831},
   review={\MR{1510727}},
   review={JFM 24.0188.02}
}
\bib{Bianchi1893}{article}{
   author={Bianchi, L.},
   title={Sui gruppi di sostituzioni lineari},
   language={Italian},
   journal={Math. Ann.},
   volume={42},
   date={1893},
   number={1},
   pages={30--57},
   issn={0025-5831},
   review={\MR{1510766}},
   review={JFM 25.0198.04},
   doi={10.1007/BF01443444},
}
\bib{BorelSerre}{article}{
 Author = {{Borel}, A.},
 author = {{Serre}, J.-P.},
 Title = {{Corners and arithmetic groups}},
%  FJournal = {{Commentarii Mathematici Helvetici}},
 Journal = {{Comment. Math. Helv.}},
 ISSN = {0010-2571; 1420-8946/e},
 Volume = {48},
 Pages = {436--483},
 Year = {1973},
 Publisher = {European Mathematical Society (EMS) Publishing House, Zurich},
%  Language = {English},
%  MSC2010 = {22E15 57T99 14F99 57T10 20J06},
      review={ Zbl 0274.22011}
}
\bib{Cox}{book}{
   author={Cox, D. A.},
   title={Primes of the form $x^2 + ny^2$},
   series={Pure and Applied Mathematics (Hoboken)},
   edition={2},
   note={Fermat, class field theory, and complex multiplication},
   publisher={John Wiley \& Sons, Inc., Hoboken, NJ},
   date={2013},
   pages={xviii+356},
   isbn={978-1-118-39018-4},
   review={\MR{3236783}},
      review={ Zbl 1275.11002},
   doi={10.1002/9781118400722},
}

\bib{Cremona}{article}{
   author={Cremona, J. E.},
   title={Hyperbolic tessellations, modular symbols, and elliptic curves
   over complex quadratic fields},
   journal={Compositio Math.},
   volume={51},
   date={1984},
   number={3},
   pages={275--324},
   issn={0010-437X},
   review={\MR{743014}},
}

\bib{CremonaAranes}{article}{
   author={Cremona, J. E.},
   author={Aran\'{e}s, M. T.},
   title={Congruence subgroups, cusps and Manin symbols over number fields},
   conference={
      title={Computations with modular forms},
   },
   book={
      series={Contrib. Math. Comput. Sci.},
      volume={6},
      publisher={Springer, Cham},
   },
   date={2014},
   pages={109--127},
   review={\MR{3381450}},
%    doi={10.1007/978-3-319-03847-6_4},
}
		
\bib{ElstrodtGrunewaldMennicke}{book}{
   author={Elstrodt, J.},
   author={Grunewald, F.},
   author={Mennicke, J.},
   title={Groups acting on hyperbolic space},
   series={Springer Monographs in Mathematics},
   publisher={Springer-Verlag},
   place={Berlin},
   date={1998},
   pages={xvi+524},
   ISBN={3-540-62745-6},
   review={\MR{1483315 (98g:11058)}},
   review={Zbl 0888.11001 }
}
\bib{Fine}{book}{
   author={Fine, B.},
   title={Algebraic theory of the Bianchi groups},
   series={Monographs and Textbooks in Pure and Applied Mathematics},
   volume={\textbf{129}},
   publisher={Marcel Dekker Inc.},
   place={New York},
   date={1989},
   pages={viii+249},
   ISBN={0-8247-8192-9},
   review={\MR{1010229 (90h:20002)}},
   review={Zbl 0760.20014 }
}
 \bib{MaclachlanReid}{book}{
   author={Maclachlan, C.},
   author={Reid, A. W.},
   title={The arithmetic of hyperbolic 3-manifolds},
   series={Graduate Texts in Mathematics},
   volume={\textbf{219}},
   publisher={Springer-Verlag},
   place={New York},
   date={2003},
   pages={xiv+463},
   ISBN={0-387-98386-4},
   review={\MR{1937957 (2004i:57021)}},
   review={Zbl 1025.57001}
}
\bib{Mendoza}{book}{
   author={Mendoza, Eduardo R.},
   title={Cohomology of ${\rm PGL}_{2}$ over imaginary quadratic integers},
   series={Bonner Mathematische Schriften [Bonn Mathematical Publications],
   128},
   note={Dissertation, Rheinische Friedrich-Wilhelms-Universit\"{a}t, Bonn,
   1979},
   publisher={Universit\"{a}t Bonn, Mathematisches Institut, Bonn},
   date={1979},
   pages={vi+83},
   review={\MR{611515}},
}
% \bib{Poincare}{article}{
%    author={Poincar{\'e}, H.},
%    title={M\'emoire sur les groupes klein\'eens},
%    language={French},
%    journal={Acta Math.},
%    volume={3},
%    date={1883},
%    number={1},
%    pages={49--92},
%    issn={0001-5962},
%    review={\MR{1554613}, JFM~15.0348.02},
% }

\bib{BianchiGP}{article}{
   author={Rahm, A. D.},
   title={Higher torsion in the Abelianization of the full Bianchi groups},
   journal={LMS J. Comput. Math.},
   volume={16},
   date={2013},
   pages={344--365},
   issn={1461-1570},
   review={\MR{3109616}},
}
		
\bib{questionOfSerre}{article}{
   author =  {Rahm, A. D.} ,
   title =   {On a question of Serre},
   journal = {C. R. Math. Acad. Sci. Paris},
   volume={350},
   pages={  no. 15-16, 741--744},
   review={\MR{2981344}},
   year =    {2012}
}

\bib{corrigendum}{article}{
   author =  {Rahm, A. D.} ,
   title =   {Corrigendum to ``On a question of Serre''},
   journal = {C. R. Math. Acad. Sci. Paris},
   volume={357},
   pages={589--590},
   year ={2019},
}
\bib{RahmSengun}{article}{
   author={Rahm, A. D.},
   author={\c Seng\"un, M. H.},
   title={On level one cuspidal Bianchi modular forms},
   journal={LMS J. Comput. Math.},
   volume={16},
   date={2013},
   pages={187--199},
   issn={1461-1570},
   review={\MR{3091734}},
}
\bib{RahmTsaknias}{article}{
   author={Rahm, A. D.},
      author={Tsaknias, P.},
      journal={Journal de Th\'eorie des Nombres de Bordeaux},
      volume={31},
      number={1},
      pages={27--48},
           title = {Genuine Bianchi modular forms of higher level, at varying weight and discriminant},
          review = {\url{http://hdl.handle.net/10993/29493}},
            year = {2019},
             }
\bib{Serre}{article} {
    AUTHOR = {Serre, J.-P.},
     TITLE = {Le probl\`eme des groupes de congruence pour ${SL}_2$},
   JOURNAL = {Ann. of Math. (2)},
%  FJOURNAL = {Annals of Mathematics. Second Series},
    VOLUME = {92},
      YEAR = {1970},
     PAGES = {489--527},
      ISSN = {0003-486X},
%   MRCLASS = {14.50},
%  MRNUMBER = {MR0272790 (42 \#7671)},
%MRREVIEWER = {J. E. Humphreys},
     review = { \MR{ 0272790}},% (\textbf{42} \#7671)},
%      review = { Zbl 0239.20063}
}
\bib{Swan}{article}{
   author={Swan, R. G.},
   title={Generators and relations for certain special linear groups},
   journal={Advances in Math.},
   volume={6},
   date={1971},
   pages={1--77},
   ISSN={0001-8708},
   review={\MR{0284516}},% (\textbf{44} \#1741)},
%    review={Zbl 0221.20060 }
}
\bib{Weinberger}{article}{
   author={Weinberger, P. J.},
   title={Exponents of the class groups of complex quadratic fields},
   journal={Acta Arith.},
   volume={22},
   date={1973},
   pages={117--124},
   issn={0065-1036},
   review={\MR{0313221}},% (47 \#1776)}},
}
\end{biblist}
\end{bibdiv}
\end{document}